\documentclass[12pt]{amsart}
\usepackage{txfonts}

\usepackage{amssymb}
\usepackage{mathrsfs}
\usepackage{amsmath}
\usepackage{amssymb,latexsym}






\newtheorem{thm}{Theorem}[section]
\newtheorem{cor}[thm]{Corollary}



\theoremstyle{definition}
\newtheorem{defn}[thm]{Definition}


\numberwithin{equation}{section}

\frenchspacing

\textwidth=13.5cm \textheight=23cm
\parindent=16pt
\oddsidemargin=0cm \evensidemargin=0cm



\begin{document}
\baselineskip=17pt

\title [error bounds for quasi-Monte Carlo integration  for $\mathscr{L}_{\infty}$]{error bounds for quasi-Monte Carlo integration  for $\mathscr{L}_{\infty}$ with uniform point sets}

\author[Su Hu]{Su Hu}
\address{Department of Mathematical Sciences\\ Tsinghua University\\
Beijing 100084, China} \email{hus04@mails.tsinghua.edu.cn}

\author[Yan Li]{Yan Li}
\address{Department of Mathematical Sciences\\ Tsinghua University\\
Beijing 100084, China}
\email{liyan\_00@mails.tsinghua.edu.cn}

\begin{abstract} Niederreiter~\cite{Nie} established new bounds for quasi-Monte Carlo integration for nodes sets with a special kind of uniformity property. Let $(X,\mathscr{A},\mu)$ be an arbitrary probability space, i.e., $X$ is an arbitrary nonempty set, $\mathscr{A}$ a $\sigma$-algebra of subsets of $X$, and $\mu$ a probability measure defined on $\mathscr{A}$. The functions considered in ~\cite{Nie} are bounded $\mu$-integrable functions on $X$. In this note, we extend some of his results for  bounded  $\mu$-integrable functions to essentially bounded $\mathscr{A}$-measurable functions. So Niederreiter's bounds can be used in a more general setting.
\end{abstract}

\subjclass[2000]{Primary 11K45; Secondary 65D30} \keywords {Numerical integration, Quasi-Monte Carlo method, Uniform point set, Essentially bounded measurable function.} \maketitle

\section{Introduction}
  Let $(X,\mathscr{A},\mu)$ be an arbitrary probability space, i.e., $X$ is an arbitrary nonempty set, $\mathscr{A}$ a $\sigma$-algebra of subsets of $X$, and $\mu$ a probability measure defined on $\mathscr{A}$. Niederreiter~\cite{Nie} established new bounds for quasi-Monte Carlo integration for nodes sets with a special kind of uniformity property. The functions considered in ~\cite{Nie} are bounded $\mu$-integrable functions on $X$. In this note, we extend some of his results for bounded  $\mu$-integrable functions to essentially bounded $\mathscr{A}$-measurable functions.

\section{Main results}
Let $\mathscr{L}_{\infty}(X,\mathscr{A},\mu)$ be the set of all essentially bounded $\mathscr{A}$-measurable functions on $X$, two functions being identified if they differ only on a $\mu$-null set.
For any $\mathscr{A}$-measurable function $g$ on $X$,  $||g||_{\infty} (\textrm{esssup}_{x\in X}|g|)$  denotes  the essential supremum of $|g|$ (see P.346 of ~\cite{Ed}). For an extended real-valued function $f$, we define
$f^{+}=\textrm{max}\{f,0\} ~\textrm{and}~ f^{-}=-\textrm{min}\{f,0\}.$
Notice that $f^{+}\geq 0,  f^{-}\geq 0$, and $f=f^{+}-f^{-}$ (see P.164 of ~\cite{Ed}). For a given nonempty subset $\mathscr{M}$ of $\mathscr{A}$, let $L_{\mathscr{M}}$ be linear subspace of $\mathscr{L}_{\infty}(X,\mathscr{A},\mu)$ spanned by the constant function 1 and all characteristic functions $\chi_{M}$, $M\in\mathscr{M}$. For any $f\in\mathscr{L}_{\infty}(X,\mathscr{A},\mu)$, let $D(f,L_{\mathscr{M}})$ be the $\mathscr{L}_{\infty}$ distance from $f$ to $\mathscr{L}_{\mathscr{M}}$, that is ,$$D(f,L_{\mathscr{M}})=\textrm{inf}_{l\in\mathscr{L}_{\mathscr{M}}}||f-l||_{\infty}.$$
The following definition can be found in P.285 of ~\cite{Nie}.
\begin{defn}\label{UN} Let $(X,\mathscr{A},\mu)$ be an arbitrary probability space, let $\mathscr{M}$ be a nonempty subset of $\mathscr{A}$. A point set $\mathscr{P}$ of $N$ elements of $X$ is called $(\mathscr{M},\mu)$-uniform if
$$\sum_{i=1}^{N}\chi_{M}(X_{n})=A(M;\mathscr{P})=\mu(M)N, \textrm{for all}~ M\in\mathscr{M}.$$
\end{defn}
Let $(\underbrace{X\times...\times X}_{N},\underbrace{\mathscr{A}\times...\times\mathscr{A}}_{N},\underbrace{\mu\times...\times\mu}_{N})$  be the product measurable space (see P.379 of~\cite{Ed}). We can view a point set $\mathscr{P}=\{X_{1},...,X_{N}\}$ as a point in $\underbrace{X\times...\times X}_{N}$ and
$\frac{1}{N}\sum_{n=1}^{N}f(X_{n})$ as a $N$-variable function on $\underbrace{X\times...\times X}_{N}$. Since  $f\in\mathscr{L}_{\infty}(X,\mathscr{A},\mu)$, we have $\frac{1}{N}\sum_{n=1}^{N}f(X_{n})\in\mathscr{L}_{\infty}(\underbrace{X\times...\times X}_{N},\underbrace{\mathscr{A}\times...\times\mathscr{A}}_{N},\underbrace{\mu\times...\times\mu}_{N})$.

Let $$\mathscr{C}=\{(X_{1},...,X_{N})\in\underbrace{X\times...\times X}_{N}|\mathscr{P}=\{X_{1},...,X_{N}\} \textrm{is an} (\mathscr{M},\mu)-\textrm{uniform point set}\}.$$ Since $f(X_{1},X_{2}...,X_{N})=\sum_{i=1}^{N}\chi_{M}(X_{n})$ is a measurable function on $X\times...\times X$, from Definition~\ref{UN} and Lemma 11.9 of ~\cite{Ed}, if $\mathscr{M}$ is a countable nonempty subset of $\mathscr{A}$, then $\mathscr{C}$ is a measurable set.
\begin{thm}\label{main} Let $(X,\mathscr{A},\mu)$ be an arbitrary probability space. Let $\mathscr{M}$ be a countable nonempty subset of $\mathscr{A}$. Let $$\mathscr{C}=\{(X_{1},...,X_{N})\in\underbrace{X\times...\times X}_{N}|\mathscr{P}=\{X_{1},...,X_{N}\} \textrm{is an} (\mathscr{M},\mu)-\textrm{uniform point set}\}.$$ Then for any $f\in\mathscr{L}_{\infty}(X,\mathscr{A},\mu)$, we have  $$\textrm{esssup}_{(X_{1},...,X_{N})\in\mathscr{C}}|\frac{1}{N}\sum_{n=1}^{N}f(X_{n})-\int_{X}f d\mu|\leq 2D(f,L_{\mathscr{M}}).$$
\end{thm}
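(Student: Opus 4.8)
The plan is to exploit the defining property of $(\mathscr{M},\mu)$-uniformity to show that every $l\in L_{\mathscr{M}}$ is integrated \emph{exactly} by an $(\mathscr{M},\mu)$-uniform point set, and then to control the remainder $f-l$ through the essential sup norm. First I would fix an arbitrary $l\in L_{\mathscr{M}}$ and write it as a finite linear combination $l=a_0+\sum_{j=1}^{k}a_j\chi_{M_j}$ with $M_j\in\mathscr{M}$. For any $(X_1,\ldots,X_N)\in\mathscr{C}$, Definition~\ref{UN} gives $\frac{1}{N}\sum_{n=1}^{N}\chi_{M_j}(X_n)=\mu(M_j)$ for each $j$, whence
$$\frac{1}{N}\sum_{n=1}^{N}l(X_n)=a_0+\sum_{j=1}^{k}a_j\mu(M_j)=\int_X l\,d\mu.$$
Subtracting this identity, for every $(X_1,\ldots,X_N)\in\mathscr{C}$ one has
$$\frac{1}{N}\sum_{n=1}^{N}f(X_n)-\int_X f\,d\mu=\frac{1}{N}\sum_{n=1}^{N}(f-l)(X_n)-\int_X (f-l)\,d\mu,$$
which reduces the whole problem to bounding the two terms on the right for the function $g:=f-l$.

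The integral term is immediate: since $|g|\leq ||g||_{\infty}$ holds $\mu$-almost everywhere, we get $|\int_X g\,d\mu|\leq\int_X|g|\,d\mu\leq ||g||_{\infty}$. The sum term is where the essential-sup hypothesis, rather than an honest pointwise bound, forces a product-measure argument; this is the main obstacle and the exact point at which the extension beyond Niederreiter's bounded functions takes place. The set $E=\{x\in X:|g(x)|>||g||_{\infty}\}$ is $\mathscr{A}$-measurable with $\mu(E)=0$ by the definition of the essential supremum. Its lift $B=\{(X_1,\ldots,X_N):X_n\in E\text{ for some }n\}=\bigcup_{n=1}^{N}(X\times\cdots\times E\times\cdots\times X)$ is a finite union of sets each of product measure $\mu(E)=0$, hence $(\mu\times\cdots\times\mu)(B)=0$. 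For every tuple outside $B$ we have the genuine pointwise bound $|g(X_n)|\leq ||g||_{\infty}$ for all $n$, so $|\frac{1}{N}\sum_{n=1}^{N}g(X_n)|\leq ||g||_{\infty}$. Discarding the product-null set $B\cap\mathscr{C}$ then yields $\textrm{esssup}_{(X_1,\ldots,X_N)\in\mathscr{C}}|\frac{1}{N}\sum_{n=1}^{N}g(X_n)|\leq ||g||_{\infty}$.

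Combining the two estimates by the triangle inequality gives
$$\textrm{esssup}_{(X_1,\ldots,X_N)\in\mathscr{C}}\left|\frac{1}{N}\sum_{n=1}^{N}f(X_n)-\int_X f\,d\mu\right|\leq 2||f-l||_{\infty}.$$
Since $l\in L_{\mathscr{M}}$ was arbitrary, passing to the infimum over all such $l$ replaces the right-hand side by $2D(f,L_{\mathscr{M}})$, which is precisely the asserted bound. I expect the only delicate point to be the verification that $B$ has product measure zero and that the essential supremum over $\mathscr{C}$ may legitimately ignore $B\cap\mathscr{C}$; the measurability of $\mathscr{C}$, needed to make the left-hand essential supremum meaningful in the first place, is exactly what the countability of $\mathscr{M}$ secured in the discussion preceding the theorem.
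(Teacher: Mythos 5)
Your proposal is correct and follows essentially the same route as the paper's proof: exact integration of elements of $L_{\mathscr{M}}$ via the uniformity condition, reduction to $g=f-l$, control of the sampling sum by lifting the $\mu$-null set $\{|g|>\|g\|_{\infty}\}$ to a product-null set, and an infimum over $l$ at the end. The only cosmetic difference is that you bound $\int_X|g|\,d\mu$ directly by $\|g\|_{\infty}$, whereas the paper splits the integral over $\{|g|>\|g\|_{\infty}\}$ and its complement, which amounts to the same thing.
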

\begin{proof}
We extend Niederreiter's proof for Theorem 1 of ~\cite{Nie} to our case.
For any $M\in\mathscr{M}$ and any $(\mathscr{M},\mu)-$uniform point set $\mathscr{P}=\{X_{1},...,X_{N}\}$. we have $$\frac{1}{N}\sum_{n=1}^{N}\chi_{M}(X_{n})=\frac{A(M;\mathscr{P})}{N}=\mu(M)=\int_{X}\chi_{M} d\mu $$ by the definition of an $(\mathscr{M},\mu)-$uniform point set.
\\For any $l\in\mathscr{L}_{\mathscr{M}}$ and any $(\mathscr{M},\mu)-$uniform point set $\mathscr{P}=\{X_{1},...,X_{N}\}$, we have
$$\int_{X}l d\mu=\frac{1}{N}\sum_{n=1}^{N}l(X_{n}).$$  Thus for any $f\in\mathscr{L}_{\infty}(X,\mathscr{A},\mu)$ and any $(\mathscr{M},\mu)-$uniform point set $\mathscr{P}=\{X_{1},...,X_{N}\}$ we have
 \begin{equation*}\begin{aligned}&\frac{1}{N}\sum_{n=1}^{N}f(X_{n})-\int_{X}f d\mu \\&=\frac{1}{N}\sum_{n=1}^{N}(f-l)(X_{n})+\frac{1}{N}\sum_{n=1}^{N}l(X_{n})-\int_{X}(f-l) d\mu-\int_{X}l d\mu
 \\&=\frac{1}{N}\sum_{n=1}^{N}(f-l)(X_{n})-\int_{X}(f-l) d\mu\end{aligned}\end{equation*} for all $l\in L_{\mathscr{M}}$.
\\So
\begin{equation*}\begin{aligned}&\textrm{esssup}_{(X_{1},...,X_{N})\in\mathscr{C}}|\frac{1}{N}\sum_{n=1}^{N}f(X_{n})-\int_{X}f d\mu|
 \\&\leq \textrm{esssup}_{(X_{1},...,X_{N})\in\mathscr{C}}|\frac{1}{N}\sum_{n=1}^{N}(f-l)(X_{n})|+\int_{X}|f-l| d\mu\\&\leq \textrm{esssup}_{(X_{1},...,X_{N})\in\mathscr{C}}|\frac{1}{N}\sum_{n=1 }^{N}(f-l)(X_{n})|+\int_{\{x\in X||f-l|>||f-l||_{\infty}\}}|f-l| d\mu\\&+\int_{\{x\in X||f-l|\leq||f-l||_{\infty}\}}|f-l|d\mu \end{aligned}\end{equation*} for all $l\in L_{\mathscr{M}}$.
\\Since
\begin{equation*}\begin{aligned}&\{(X_{1},...,X_{N})\in\mathscr{C}\mid|\frac{1}{N}\sum_{n=1}^{N}(f-l)(X_{n})| > ||f-l||_{\infty}\}\\&\subset\{(X_{1},...,X_{N})\in\underbrace{X\times...\times X}_{N}\mid|\frac{1}{N}\sum_{n=1}^{N}(f-l)(X_{n})|> ||f-l||_{\infty}\}
 \\&\subset\cup_{n=1}^{N} (X\times...\times\{X_{n}\in X\mid|(f-l)(X_{n})|>||f-l||_{\infty}\}...\times X),\end{aligned}\end{equation*}
we have
\begin{equation*}\begin{aligned}&\underbrace{\mu\times...\times\mu}_{N}(\{(X_{1},...,X_{N})\in\mathscr{C}\mid|\frac{1}{N}\sum_{n=1}^{N}(f-l)(X_{n})| > ||f-l||_{\infty}\})\\&\leq\underbrace{\mu\times...\times\mu}_{N}(\{(X_{1},...,X_{N})\in\underbrace{X\times...\times X}_{N} \mid|\frac{1}{N}\sum_{n=1}^{N}(f-l)(X_{n})| > ||f-l||_{\infty}\})\\&\leq\sum_{n=1}^{N}\underbrace{\mu\times...\times\mu}_{N}(X\times...\times\{X_{n}\in X\mid|(f-l)(X_{n})|>||f-l||_{\infty}\}...\times X)
 \\&=\sum_{n=1}^{N}\mu(\{X_{n}\in X\mid|(f-l)(X_{n})|>||f-l||_{\infty}\})=0 ,\end{aligned}\end{equation*} the last equality follows from Fubini's theorem (see P.384 of~\cite{Ed}).\\From the definition of $\mathscr{L}_{\infty}$-norm, we have $$\textrm{esssup}_{(X_{1},...,X_{N})\in\mathscr{C}}|\frac{1}{N}\sum_{n=1}^{N}(f-l)(X_{n})|\leq ||f-l||_{\infty}.$$
Also from the definition of $\mathscr{L}_{\infty}$-norm, we have $$\mu(\{x\in X\mid|f-l|>||f-l||_{\infty}\})=0,$$ thus $$\int_{\{x\in X\mid|f-l|>||f-l||_{\infty}\}}|f-l| d\mu=0.$$ So \begin{equation*}\begin{aligned}&\textrm{esssup}_{(X_{1},...,X_{N})\in\mathscr{C}}|\frac{1}{N}\sum_{n=1}^{N}f(X_{n})-\int_{X}f d\mu|
  \\&\leq ||f-l||_{\infty}+\int_{\{x\in X\mid|f-l|\leq||f-l||_{\infty}\}}|f-l|d\mu\\&\leq 2||f-l||_{\infty} \end{aligned}\end{equation*} for all $l\in L_{\mathscr{M}}$.
\end{proof}
Let $\mathscr{M}=\{M_{1},...,M_{k}\}$ be a finite nonempty subset of $\mathscr{A}$ such that  $M_{1},...,M_{k}$ are disjoint and $\cup_{i=1}^{k}{M_{i}}=X$. If $f\in\mathscr{L}_{\infty}(X,\mathscr{A},\mu)$, then
$f\in\mathscr{L}_{\infty}(X,\mathscr{A}|_{M_{i}},\mu|_{M_{i}})$ for any $1\leq i\leq k$.
Let
\begin{equation*}{G_{j}(f)=}\begin{cases}-\textrm{inf}_{x\in M_{j}} f^{-}, &\textrm{if}\  \mu(\{x\in M_{j}|f^{+}(x)>0\})=0;\\
\textrm{esssup}_{x\in M_{j}} f^{+}, &\textrm{otherwise},
        \end{cases}
\end{equation*}
\begin{equation*}{g_{j}(f)=}\begin{cases}\textrm{inf}_{x\in M_{j}} f^{+}, &\textrm{if}\ \mu(\{x\in M_{j}|f^{-}(x)>0\})=0;\\
-\textrm{esssup}_{x\in M_{j}} f^{-}, &\textrm{otherwise},
        \end{cases}
\end{equation*}
for $1\leq j\leq k$.
Define $$S_{\mathscr{M}}(f)=\textrm{max}_{1\leq j\leq k}(G_{j}(f)-g_{j}(f)).$$
\begin{cor}Let $(X,\mathscr{A},\mu)$ be an arbitrary probability space. Let $\mathscr{M}=\{M_{1},...,M_{k}\}$ be a finite nonempty subset of $\mathscr{A}$ such that  $M_{1},...,M_{k}$ are disjoint  and $\cup_{j=1}^{k}{M_{j}}=X$. Let $$\mathscr{C}=\{(X_{1},...,X_{N})\in\underbrace{X\times...\times X}_{N}|\mathscr{P}=\{X_{1},...,X_{N}\} \textrm{is an} (\mathscr{M},\mu)-\textrm{uniform point set}\}.$$ Then for any $f\in\mathscr{L}_{\infty}(X,\mathscr{A},\mu)$, we have  $$\textrm{esssup}_{(X_{1},...,X_{N})\in\mathscr{C}}|\frac{1}{N}\sum_{n=1}^{N}f(X_{n})-\int_{X}f d\mu|\leq S_{\mathscr{M}}(f).$$
\end{cor}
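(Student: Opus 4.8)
The plan is to deduce the corollary directly from Theorem~\ref{main} by exhibiting a single element $l\in L_{\mathscr{M}}$ that realizes the bound $S_{\mathscr{M}}(f)$. Since $M_{1},\dots,M_{k}$ is a partition of $X$, we have $\sum_{j=1}^{k}\chi_{M_{j}}=1$, so $L_{\mathscr{M}}$ is precisely the space of functions constant on each block: every $l\in L_{\mathscr{M}}$ has the form $l=\sum_{j=1}^{k}c_{j}\chi_{M_{j}}$, and because the $M_{j}$ are disjoint and cover $X$ we get $\|f-l\|_{\infty}=\textrm{max}_{1\leq j\leq k}\textrm{esssup}_{x\in M_{j}}|f(x)-c_{j}|$. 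Theorem~\ref{main} already gives $\textrm{esssup}_{(X_{1},\dots,X_{N})\in\mathscr{C}}|\frac{1}{N}\sum_{n=1}^{N}f(X_{n})-\int_{X}f\,d\mu|\leq 2D(f,L_{\mathscr{M}})\leq 2\|f-l\|_{\infty}$, so it suffices to choose the constants $c_{j}$ so that $2\|f-l\|_{\infty}\leq S_{\mathscr{M}}(f)$.

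The heart of the matter is the claim that $G_{j}(f)$ and $g_{j}(f)$ essentially bracket $f$ on each block, i.e.\ that $g_{j}(f)\leq f(x)\leq G_{j}(f)$ for $\mu$-almost every $x\in M_{j}$. Granting this, I would set $c_{j}=\tfrac{1}{2}\bigl(G_{j}(f)+g_{j}(f)\bigr)$, so that $|f(x)-c_{j}|\leq\tfrac{1}{2}\bigl(G_{j}(f)-g_{j}(f)\bigr)$ for a.e.\ $x\in M_{j}$; taking the essential supremum over $M_{j}$ and then the maximum over $j$ yields $\|f-l\|_{\infty}\leq\tfrac{1}{2}S_{\mathscr{M}}(f)$, whence $2\|f-l\|_{\infty}\leq S_{\mathscr{M}}(f)$ and the corollary drops out of Theorem~\ref{main}. (Blocks with $\mu(M_{j})=0$ do not contribute to either $\|f-l\|_{\infty}$ or to the essential supremum over $\mathscr{C}$, so they are harmless.)

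To prove the bracketing claim I would treat the two defining cases of $G_{j}$ separately, handling $g_{j}$ by the mirror-image argument with $f^{+}$ and $f^{-}$ interchanged. When $\mu(\{x\in M_{j}\mid f^{+}(x)>0\})>0$, one checks that $\textrm{esssup}_{x\in M_{j}}f=\textrm{esssup}_{x\in M_{j}}f^{+}=G_{j}(f)$, because $f=f^{+}$ on the positive-measure set $\{f^{+}>0\}$ while $f\leq 0$ on its complement, so the two essential suprema coincide and $f\leq G_{j}(f)$ a.e. When $\mu(\{x\in M_{j}\mid f^{+}(x)>0\})=0$, we have $f\leq 0$ a.e.\ and hence $f=-f^{-}$ a.e.; since $f^{-}(x)\geq \textrm{inf}_{x\in M_{j}}f^{-}$ holds pointwise, we conclude $f=-f^{-}\leq -\textrm{inf}_{x\in M_{j}}f^{-}=G_{j}(f)$ a.e. The delicate point — the main obstacle — is exactly this case analysis: in the degenerate cases the definitions use the ordinary infimum $\textrm{inf}_{x\in M_{j}}f^{\mp}$ rather than the essential infimum, and one must confirm that the resulting $G_{j}(f)$ and $g_{j}(f)$ still dominate, respectively are dominated by, $f$ off a $\mu$-null set. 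Once this pointwise-versus-essential bookkeeping is settled, assembling the four combinations of the two case splits is routine.
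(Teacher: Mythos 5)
Your proposal is correct and follows essentially the same route as the paper: take $l=\sum_{j}\tfrac{1}{2}(G_{j}(f)+g_{j}(f))\chi_{M_{j}}$, show $\|f-l\|_{\infty}\leq\tfrac{1}{2}S_{\mathscr{M}}(f)$, and invoke Theorem~\ref{main}. The only difference is that you explicitly verify the bracketing claim $g_{j}(f)\leq f\leq G_{j}(f)$ a.e.\ on $M_{j}$ through the case analysis on the degenerate definitions of $G_{j}$ and $g_{j}$, a point the paper disposes of with the phrase ``by the definition of the $\mathscr{L}_{\infty}$-norm''; your version is the more careful one.
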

\begin{proof} We extend Niederreiter's proof for Corollary  1 of ~\cite{Nie} to our case.
Let $$C_{j}=\frac{1}{2}(G_{j}(f)+g_{j}(f))$$ for $1 \le j \le k$, let $$l=\sum_{j=1}^{k}C_{j}\chi_{M_{j}}.$$
Since $$\{t\in M_{j}\mid|f(t)-C_{j}|>\frac{G_{j}(f)-g_{j}(f)}{2}\}\subset \{t\in M_{j}| f(t)>G_{j}(f)\}\cup\{t\in M_{j}\mid f(t)<g_{j}(f)\},$$
we have $$\mu(\{t\in M_{j}\mid|f(t)-C_{j}|>\frac{G_{j}(f)-g_{j}(f)}{2}\})\leq\mu(\{t\in M_{j}\mid f(t) >G_{j}(f)\})+\mu(\{t\in M_{j}\mid f(t) <g_{j}(f)\})=0$$ by the definition of $\mathscr{L}_{\infty}$-norm.
\\For $t\in M_{j}$,
we have  \begin{equation*}\begin{aligned}&\textrm{esssup}_{t\in M_{j}}|f(t)-l(t)|
 \\&=\textrm{esssup}_{t\in M_{j}}|f(t)-C_{j}|\\&\leq\frac{1}{2}(G_{j}(f)-g_{j}(f)) \end{aligned}\end{equation*}
Therefore $$||f-l||_{\infty}\leq\frac{1}{2}S_{\mathscr{M}}(f).$$
Thus $$D(f,L_{\mathscr{M}})\leq\frac{1}{2}S_{\mathscr{M}}(f).$$
From Theorem~\ref{main},we get our result.\end{proof}

\begin{cor}Let $(X,\mathscr{A},\mu)$ be an arbitrary probability space, let $\mathscr{M}=\{M_{1},...,M_{k}\}$ be a finite nonempty subset of $\mathscr{A}$ such that  $M_{1},...,M_{k}$ are disjoint  and $\cup_{j=1}^{k}{M_{j}}=X$. Let $$\mathscr{C}=\{(X_{1},...,X_{N})\in\underbrace{X\times...\times X}_{N}|\mathscr{P}=\{X_{1},...,X_{N}\} \textrm{is an} (\mathscr{M},\mu)-\textrm{uniform point set}\}.$$ Then for any $f\in\mathscr{L}_{\infty}(X,\mathscr{A},\mu)$  ,  we have  $$\textrm{esssup}_{(X_{1},...,X_{N})\in\mathscr{C}}|\frac{1}{N}\sum_{n=1}^{N}f(X_{n})-\int_{X}f d\mu|\leq\sum_{j=1}^{k}\mu(M_{j})(G_{j}(f)-g_{j}(f)).$$
\end{cor}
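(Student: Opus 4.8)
The plan is to estimate the deviation block by block, using the partition $X=\bigcup_j M_j$ together with the exact count of points that a uniform set places in each $M_j$. Note first that the asserted bound is genuinely sharper than the one from the preceding corollary, since $\sum_{j=1}^k\mu(M_j)(G_j(f)-g_j(f))\le \max_{1\le j\le k}(G_j(f)-g_j(f))=S_{\mathscr{M}}(f)$ because $\sum_{j=1}^k\mu(M_j)=1$; hence it cannot simply be quoted from Theorem~\ref{main} with the choice of $l$ used there, and the refinement must come from keeping the blocks separate instead of bounding everything by a single $\mathscr{L}_\infty$-norm.

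The first step is to check that, in each of the two cases of the definitions, $G_j(f)$ is an essential upper bound and $g_j(f)$ an essential lower bound for $f$ on $M_j$, i.e.
$$g_j(f)\le f\le G_j(f)\quad\mu\text{-a.e. on } M_j.$$
This is a routine verification: when $\mu(\{x\in M_j:f^+(x)>0\})>0$ one uses $f\le f^+\le\textrm{esssup}_{M_j}f^+=G_j(f)$ a.e., and in the remaining case $f\le 0$ a.e. on $M_j$, so $f=-f^-\le-\inf_{M_j}f^-=G_j(f)$ a.e.; the bound for $g_j(f)$ is symmetric. Let $E=\bigcup_{j=1}^k\{x\in M_j: f(x)>G_j(f)\text{ or } f(x)<g_j(f)\}$, a set with $\mu(E)=0$. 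Exactly as in the proof of Theorem~\ref{main}, the set of configurations $(X_1,\dots,X_N)$ having some coordinate in $E$ is contained in $\bigcup_{n=1}^N(X\times\cdots\times E\times\cdots\times X)$, which has product measure $0$ by the union bound and Fubini's theorem. Thus for $\underbrace{\mu\times\cdots\times\mu}_{N}$-almost every point of $\mathscr{C}$ all sample points satisfy $g_j(f)\le f(X_n)\le G_j(f)$ whenever $X_n\in M_j$.

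Fix such a configuration. Writing the deviation as a sum over blocks,
$$\frac{1}{N}\sum_{n=1}^N f(X_n)-\int_X f\,d\mu=\sum_{j=1}^k\Big(\frac{1}{N}\sum_{X_n\in M_j}f(X_n)-\int_{M_j}f\,d\mu\Big),$$
I would use the uniformity hypothesis $\frac{1}{N}\#\{n:X_n\in M_j\}=A(M_j;\mathscr{P})/N=\mu(M_j)$ together with the pointwise bounds to get $\mu(M_j)g_j(f)\le\frac{1}{N}\sum_{X_n\in M_j}f(X_n)\le\mu(M_j)G_j(f)$, and integrating $g_j(f)\le f\le G_j(f)$ over $M_j$ to get $\mu(M_j)g_j(f)\le\int_{M_j}f\,d\mu\le\mu(M_j)G_j(f)$. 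Each block term therefore lies in $[-\mu(M_j)(G_j(f)-g_j(f)),\mu(M_j)(G_j(f)-g_j(f))]$, and the triangle inequality yields $|\frac{1}{N}\sum_n f(X_n)-\int_X f\,d\mu|\le\sum_{j=1}^k\mu(M_j)(G_j(f)-g_j(f))$ for almost every configuration in $\mathscr{C}$; passing to the essential supremum gives the claim. The main obstacle is the one already handled in Theorem~\ref{main}: the inequalities $g_j(f)\le f\le G_j(f)$ hold only almost everywhere, so one must argue via Fubini that for almost every configuration no sample point lands in the exceptional null set $E$, after which the block-wise estimate is purely arithmetic.
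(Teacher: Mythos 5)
Your proposal is correct and follows essentially the same route as the paper: decompose the error over the blocks $M_j$, use the uniformity condition $A(M_j;\mathscr{P})=\mu(M_j)N$ together with the essential bounds $g_j(f)\le f\le G_j(f)$ on $M_j$ to trap both the block average and $\int_{M_j}f\,d\mu$ in $[\mu(M_j)g_j(f),\mu(M_j)G_j(f)]$, and dispose of the exceptional null sets by the same Fubini/union-bound argument on the product space. The only difference is cosmetic: you collect all exceptional points into a single null set $E$ up front, whereas the paper bounds the measure of each bad event in $\mathscr{C}$ separately for each $j$ and each direction of the inequality.
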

\begin{proof} We extend Niederreiter's proof for Theorem 2 of ~\cite{Nie} to our case.
From the definition of $\mathscr{L}_{\infty}$-norm, we have $$\mu(M_{j})g_{j}(f)\leq\int_{M_{j}}f d\mu\leq\mu(M_{j})G_{j}(f).$$
From the definition of uniform point set, we have
\begin{equation*}\begin{aligned}&\{(X_{1},...,X_{N})\in\mathscr{C}\mid\mu(M_{j})g_{j}(f)>\frac{1}{N}\sum_{\substack{n=1\\X_{n}\in M_{j}}}^{N}f(X_{n})\}
 \\&\subset\cup_{n=1}^{N}(X\times...\times\{X_{n}\in M_{j}\mid g_{j}(f)> f(X_{n})\}...\times X),\end{aligned}\end{equation*} for $1\leq j\leq k$.
\\Thus \begin{equation*}\begin{aligned}&\underbrace{\mu\times...\times\mu}_{N}(\{(X_{1},...,X_{N})\in \mathscr{C}\mid\mu(M_{j})g_{j}(f)>\frac{1}{N}\sum_{\substack{n=1\\X_{n}\in M_{j}}}^{N}f(X_{n})\})
 \\&\leq\sum_{n=1}^{N}\underbrace{\mu\times...\times\mu}_{N}(X\times...\times\{X_{n}\in M_{j}\mid g_{j}(f)> f(X_{n})\}...\times X)\\&=\sum_{n=1}^{N}\mu(\{X_{n}\in M_{j}\mid g_{j}(f)> f(X_{n})\})=0\end{aligned}\end{equation*} for $1\leq j\leq k$, by the definition of $g_{j}(f)$ and Fubini's theorem.
Similarly, $$\underbrace{\mu\times...\times\mu}_{N}(\{(X_{1},...,X_{N})\in\mathscr{C}\mid\mu(M_{j})G_{j}(f)<\frac{1}{N}\sum_{\substack{n=1\\X_{n}\in M_{j}}}^{N}f(X_{n})\})=0.$$Thus $$\textrm{esssup}_{(X_{1},...,X_{N})\in\mathscr{C}}|\frac{1}{N}\sum_{\substack{n=1\\X_{n}\in M_{j}}}^{N}f(X_{n})-\int_{M_{j}}f d\mu|\leq\mu(M_{j})(G_{j}(f)-g_{j}(f)),$$ for $1\leq j\leq k$.
\\Finally, from
\begin{equation*}\begin{aligned}&\frac{1}{N}\sum_{n=1}^{N}f(X_{n})-\int_{X}f d\mu\\&= \sum_{j=1}^{k}(\frac{1}{N}\sum_{\substack{n=1\\X_{n}\in M_{j}}}^{N}f(X_{n})-\int_{M_{j}}f d\mu),\end{aligned}\end{equation*} we get our result.
\end{proof}

\textbf{Acknowledgement:} This work was partially supported by Postdoctoral Science Foundation of China. The authors are grateful to the
anonymous referee for his/her helpful comments.

\end{document}